\newcommand*\samethanks[1][\value{footnote}]{\footnotemark[#1]}
\newtheorem{te}{Theorem}[section]
\newtheorem{pro}{Proposition}[section]
\newtheorem{lemma}{Lemma}[section]
\newtheorem{problem}{Problem}[section]
\newtheorem{conjecture}{Conjecture}[section]
\newcommand{\beq}{\begin{eqnarray}}
\newcommand{\eeq}{\end{eqnarray}}
\newcommand{\beqs}{\begin{eqnarray*}}
\newcommand{\eeqs}{\end{eqnarray*}}
\newcommand{\inst}[1]{$^{#1}$}
\begin{document}

\title{Sandwiching  saturation number of fullerene graphs\thanks{Partly supported by Slovak-Slovenian grant no.~SK-SI-0005-10 from the Slovak Research and Development Agency, and France-Slovenian grant BI-FR/14-15-PROTEUS-001 }}

\author{Vesna~Andova\inst{1}\thanks{Partially supported by Slovenian ARRS Program P1-00383 and Creative Core - FISNM - 3330-13-500033.},\quad Franti\v sek Kardo\v s\inst{2}\thanks{Partly supported by the French ANR project DORSO.}
,\quad
    Riste~\v{S}krekovski\inst{3}\samethanks[1]
    }

\maketitle
\begin{center}
{\footnotesize
\inst{1} Faculty of Electrical Engineering  and Information Technologies, Ss Cyril and Methodius Univ.\\
          Ruger Boskovik bb, 1000 Skopje, Macedonia\\
          e-mail:{\tt vesna.andova@gmail.com}\\

\inst{2} LaBRI, University of Bordeaux  \\ 351, cours de la Lib\'eration, 33405 Talence \\ e-mail: {\tt frantisek.kardos@labri.fr}\\

\inst{3} Department of Mathematics, University of Ljubljana, 1000 Ljubljana \\
\& Faculty of Information Studies, 8000 Novo Mesto\& \\ Faculty of Mathematics, Natural Sciences and Information Technologies, University of Primorska,   Koper, Slovenia.\\

         e-mail: {\tt skrekovski@gmail.com}\\
}
\end{center}

\maketitle
\begin{center}
\end{center}

%
%
%
%
%
\begin{abstract}
    The saturation number of a graph $G$ is the cardinality of any smallest maximal matching of $G$, and it is denoted by $s(G)$. Fullerene graphs are cubic planar graphs with exactly twelve 5-faces; all the other faces are hexagons. They are used to capture the structure of carbon molecules. Here we show that the saturation number of fullerenes on $n$ vertices is essentially $ n/3$.

\end{abstract}

{\small \hspace{0.25cm} \textbf{Keywords:} fullerene graph, saturation number}

%
%
%
%
%
%

\section{Introduction}
Fullerenes are polyhedral molecules made entirely of carbon atoms.
The first fullerene, $C_{60}$, also known as buckminsterfullerene, was discovered in 1985~\cite{KHOCS}.
The name was a homage to Richard Buckminster Fuller, whose geodetic domes it resembles.
Due to the wide specter of possible applications, fullerenes attract the  attention of diverse research communities.
One of the main driving forces behind that work has been a desire to identify structural properties characteristic for stable fullerenes.
Fullerenes can also be represented as graphs; each atom is a vertex, and the bonds between them are the edges of the graph.
The methods of graph theory applied to the mathematical models of fullerene molecules resulted with a number of graph-theoretical invariants as potential stability predictors~\cite{D, FL}. Although we have the answerers to a  lot of problems and questions for fullerenes and their stability, still there is much more to be done~\cite{FRFHC, M}. For more results
about fullerenes, their mathematical, physical or chemical properties, see~\cite{FM}.

One very important question, a question that receives a lot of attention, is the fullerenes stability. The aim is finding a graph theoretical invariant(s) closely related to the stability of fullerene molecules. Number of different graph invariants that correlate with the stability were
studied. Among those invariant is the \emph{saturation number} $s(G)$ -- the cardinality of the smallest maximal matching  in a fullerene graph $G$.
The smallest maximal matching in a graph is also known as the \emph{smallest independent edge dominating set}.


Clearly, the set of vertices that is not covered by a maximal matching is independent~\cite{E}. (A set of vertices $I$ is \emph{independent} if no two  vertices from $I$ are adjacent.) This observation provides an obvious lower bound on saturation number of the graph $G$, i.e.~$(n-|I|)/2\le s(G)$, where $G$ is graph of order $n$.

The saturation number has another meaning for the chemists: it corresponds to the smallest possible number of large substituents/addents (those that occupy two adjacent atoms) that saturate the molecule. Independent set is another parameter of similar meaning: it is the maximum possible number of addents that cannot be attached to adjacent atoms.
Besides in chemistry the saturation number (smallest independent edge dominating set) has a list of interesting applications in engineering, networks, etc.

The saturation number of fullerene graph was studied in~\cite{ADKLS, D3}, where the following bounds were established.

\begin{te}\label{te:sat-diam}
For any fullerene graph $G$ on $n$ vertices and diameter $d$, it holds
        $$
        \frac{3n}{10}\le s(G)\le \frac n2-\frac 14(d - 2).
        $$
In particular,
        $$
        s(G)\le\frac n2-\frac{\sqrt{24n-15}-15}{24}.
        $$
\end{te}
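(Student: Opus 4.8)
The plan is to establish the two bounds in Theorem~\ref{te:sat-diam} separately, then derive the explicit diameter-free upper bound from the second one.

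\paragraph{Lower bound.}
For the inequality $\frac{3n}{10}\le s(G)$, I would start from a smallest maximal matching $M$ of $G$ and let $U$ be the set of $n-2|M|$ vertices left uncovered by $M$. As observed in the introduction, $U$ is an independent set, so each edge of $G$ has at least one endpoint covered by $M$; the covered vertices number $2|M|$. The strategy is a discharging / counting argument exploiting the cubic structure: every uncovered vertex has all three of its neighbours covered, and I would bound how many uncovered vertices a single matching edge can be ``responsible'' for. Concretely, an edge $uv\in M$ has at most four neighbouring vertices outside $\{u,v\}$ (two at $u$, two at $v$, since $G$ is cubic), and these absorb the uncovered vertices; by maximality each uncovered vertex is adjacent to some covered vertex. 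Setting up the inequality that counts edges between $U$ and $V(G)\setminus U$ from both sides — $3|U|$ edges leave $U$, and each covered vertex sends at most $3$ — and combining with $|U|=n-2|M|$ should yield a linear relation forcing $|M|\ge \frac{3n}{10}$. The constant $\tfrac{3}{10}$ suggests the sharp local configuration involves the $5$-faces, so I expect the pentagons to enter the extremal analysis.

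\paragraph{Upper bound via diameter.}
For $s(G)\le \frac n2-\frac14(d-2)$, the idea is constructive: a maximal matching missing many vertices is small, so I want to exhibit a maximal matching whose uncovered set is large, scaling with the diameter. I would take a geodesic path $P=v_0v_1\cdots v_d$ realizing the diameter $d$ and use it to ``free up'' vertices. The key observation is that along a shortest path one can select a spread-out independent set of vertices to leave unmatched; roughly every other vertex or every third vertex of $P$ can be made uncovered while the rest of the graph is matched greedily into a maximal matching. Each uncovered vertex lowers the matching size by $\tfrac12$ relative to the trivial $\tfrac n2$, and an independent set of size about $\tfrac{d}{2}$ along the geodesic gives the claimed $\tfrac14(d-2)$ saving. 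The care needed here is to ensure that leaving these path-vertices uncovered is compatible with maximality: none of the uncovered vertices may be adjacent, and every edge must still be dominated, which is where the geodesic property (no chords, controlled neighbourhoods) is used.

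\paragraph{Explicit diameter-free bound.}
Finally, to replace $d$ by an explicit function of $n$, I would invoke a lower bound on the diameter of a fullerene graph in terms of $n$. Since fullerenes are cubic planar graphs with bounded face sizes, the number of vertices within distance $r$ of a fixed vertex grows at most quadratically in $r$ (balls in such graphs have $O(r^2)$ vertices, reflecting the planar/spherical geometry), so $n=O(d^2)$, i.e.\ $d\ge c\sqrt n$ for an appropriate constant. Solving the quadratic $n \le a d^2 + b d + c$ for $d$ and substituting into $\frac n2-\frac14(d-2)$ produces the stated expression $\frac n2-\frac{\sqrt{24n-15}-15}{24}$; the precise constants $24$ and $15$ come from the exact ball-growth estimate for fullerenes. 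The main obstacle I anticipate is the upper bound: making the construction of the large uncovered independent set along the geodesic fully rigorous, and verifying that maximality is preserved, will require the most careful case analysis, whereas the lower bound and the final algebraic substitution are comparatively routine.
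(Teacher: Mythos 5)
First, an important point of reference: the paper never proves Theorem~\ref{te:sat-diam} at all --- it is quoted as a known result from~\cite{ADKLS,D3} --- so there is no in-paper proof to compare against, and your proposal must stand on its own. Judged that way, it has two genuine gaps. For the lower bound, the counting inequality you actually write down does not give $3n/10$: if each covered vertex is allowed to send up to $3$ edges into the uncovered set $U$, then $3|U|\le 3\cdot 2|M|$, which together with $|U|=n-2|M|$ yields only $|M|\ge n/4$. The fact you need --- which you state in passing (``a matching edge has at most four neighbouring vertices'') but never feed into the inequality --- is that every covered vertex is matched to \emph{another covered vertex}, so at most \emph{two} of its three edges can go to $U$. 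That gives $3|U|=e\bigl(U,V(G)\setminus U\bigr)\le 2\cdot 2|M|=4|M|$, hence $3(n-2|M|)\le 4|M|$ and $|M|\ge 3n/10$. Note also that pentagons play no role here: as the paper itself remarks, this bound uses only $3$-regularity (which is precisely why the authors consider it imprecise for fullerenes), so your expectation that the $5$-faces enter the extremal analysis is a red herring.

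For the upper bound, you identify the right target but leave the only hard step unproven, and the step as sketched would fail. Recall that the uncovered set $U$ of any maximal matching is independent and the matching is a perfect matching of $G-U$; conversely, if $I$ is independent and $G-I$ has a perfect matching, that matching is maximal in $G$ and misses exactly $I$. So to beat the trivial $n/2$ by $(d-2)/4$ you must produce an independent set of size at least $(d-2)/2$ whose deletion leaves a graph with a \emph{perfect} matching. Your plan --- pick roughly every other vertex on a geodesic and ``match the rest greedily'' --- does not ensure this: a maximal matching of $G-I$ may miss a vertex $w$ adjacent to some $v_i\in I$, leaving the edge $wv_i$ undominated; restoring maximality forces you to add such edges, and every repair covers a vertex of $I$, so in the worst case the greedy procedure eats all of the intended savings. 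Proving that a suitable $\Omega(d)$-size independent set along the geodesic can genuinely be left uncovered (or organizing the construction, e.g.\ by BFS levels, so that maximality comes for free) is the actual content of the theorem in~\cite{ADKLS}, and it is absent from your argument. Your final step is fine in outline: quadratic ball growth in fullerenes gives $d\ge c\sqrt n$, and the stated constants correspond to an explicit diameter bound of the form $d\ge\bigl(\sqrt{24n-15}-3\bigr)/6$, which upon substitution into $n/2-(d-2)/4$ yields exactly $n/2-\bigl(\sqrt{24n-15}-15\bigr)/24$.
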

The saturation number and independent sets on fullerene graphs and triangle-free cubic planar graphs are also studied in~\cite{FKS,HT}.
The lower bound in the previous theorem relies only on the 3-regularity of fullerenes. That makes us believe that this bound is not that precise.
%

Zito~\cite{Z} provided a probabilistic argument that almost all cubic graphs on $n$ vertices have saturation number at least $0.315812\,n\,$. On
the other hand there are at least two fullerene graph on $n$ vertices whose saturation number
is $ 3n/10$. Those graphs are dodecahedron and buckminsterfullerene. These
two fullerenes are the only fullerenes with icosahedral symmetry whose saturation number
satisfies the lower bound of Theorem~\ref{te:sat-diam}~\cite{D3}.

In this paper we show that the saturation number of fullerenes on $n$ vertices is essentially $n/3$.
\section{Definitions and preliminaries}

A \emph{fullerene graph} is a 3-connected 3-regular planar graph with only pentagonal and hexagonal faces. Owing to the Euler formula there are exactly 12 pentagons, but there is no restriction on the number of hexagons. Gr\"{u}nbaum and Motzkin~\cite{GM} showed that fullerene graphs on
$n$ vertices exist for all even $n \ge 24$ and for $n = 20$, i.e., there exists a fullerene graph with $\alpha$ hexagons where $\alpha$ is any integer distinct from 1. Although the number of pentagonal faces is negligible compared to the number of hexagonal faces, their layout is crucial for the shape of a fullerene graph. If all pentagonal faces are equally distributed, we obtain fullerene graphs of spherical shape with icosahedral symmetry, whose smallest representative is the dodecahedron. On the other hand, there is a class of fullerene graphs of tubular shapes, called \emph{nanotubes}.

A {\it patch}  is a 2-connected plane graph with only pentagonal or hexagonal faces, except maybe one face -- the outer face; all interior vertices are of degree 3, and all vertices incident to the outer face (on the \emph{boundary} of the patch), are of degree 2 or 3. A patch with no pentagons is called  a {\it hexagonal} patch. Note that by cutting along a cycle in a fullerene graph we always obtain two patches.

Let the number of vertices of degree 2 incident to a face $B$ be denoted by $n_2(B)$. Similarly, let $n_3(B)$ denotes the number of vertices of degree 3 incident to the face $B$.

In \cite{KS} following lemma  is proven.
\begin{lemma}\label{lemma:patch}
    Let $G$ be a patch with $p$ pentagons, and an outer face $B$. Then, $$n_2(B)-n_3(B)=6-p\,.$$
\end{lemma}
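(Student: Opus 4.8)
The plan is to prove the identity by an Euler-formula / double-counting argument applied to the patch $G$ together with its outer face $B$. First I would set up the standard face-degree counting. Let $G$ be the patch with $p$ pentagons, some number $h$ of hexagons, and the single outer face $B$. Write $V$, $E$, $F$ for the number of vertices, edges, and faces; here $F = p + h + 1$ counting the outer face. The key structural fact is that every interior vertex has degree $3$ and every boundary vertex has degree $2$ or $3$, so I would split $V = V_2 + V_3$ where $V_2 = n_2(B)$ and $V_3$ counts all degree-$3$ vertices (both interior and boundary ones); note that all degree-$2$ vertices lie on the boundary. The target quantity $n_2(B) - n_3(B)$ involves only the vertices incident to $B$, so I must be careful to separate boundary data from the global vertex count.

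Next I would write down the two counting relations. Summing degrees gives $2E = 2V_2 + 3V_3$. For the faces, I would count edge–face incidences: each pentagon contributes $5$, each hexagon contributes $6$, and the outer face $B$ contributes its boundary length $\ell = V_2 + n_3(B)$, since the boundary is a cycle whose vertices are exactly those incident to $B$. Because every edge borders exactly two faces, this yields $2E = 5p + 6h + \ell$. Plugging both into Euler's formula $V - E + F = 2$ is the engine of the proof; the hexagon count $h$ should cancel, leaving a relation purely among $p$, $V_2$, and $n_3(B)$, which is exactly what the lemma asserts.

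Carrying this out, I would substitute $V = V_2 + V_3$ and $F = p + h + 1$ into $V - E + F = 2$, then eliminate $E$ and $V_3$ using the two degree-sum identities above. The arithmetic should collapse the $h$ terms and most of the interior-vertex terms, isolating $n_2(B) - n_3(B) = 6 - p$. The step I expect to require the most care is the face-boundary count for $B$: I must argue that the boundary of the patch is a single cycle (using $2$-connectivity of the patch) so that its length equals the total number of vertices incident to $B$, namely $n_2(B) + n_3(B)$, and that each boundary edge is counted once for $B$ and once for an interior face. Getting this incidence bookkeeping exactly right, rather than the final algebra, is the crux; once $\ell = n_2(B) + n_3(B)$ is justified, the identity falls out of Euler's formula after routine simplification.
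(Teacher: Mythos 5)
Your proof is correct: the Euler-formula double count (degree sum $2E = 2n_2(B) + 3V_3$, face--edge incidences $2E = 5p + 6h + n_2(B) + n_3(B)$, and the boundary-cycle length $n_2(B)+n_3(B)$ justified by $2$-connectivity) does collapse, after eliminating $E$ and the hexagon count $h$, to exactly $n_2(B)-n_3(B)=6-p$. Note that the paper itself gives no proof of this lemma but imports it from reference [KS]; the argument there is the same standard Euler-type counting, so your approach is essentially the intended one, with the crux correctly identified as the fact that in a $2$-connected plane graph every face boundary is a cycle and every edge lies on exactly two faces.
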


\noindent From the previous lemma it follows that a patch $G$ has 6 pentagons if and only if $n_2(B)=n_3(B)$.

An {\it infinite (hexagonal) tube} is obtained from a planar hexagonal grid by identifying objects (vertices, edges, faces) lying on two parallel lines. The
way the grid is wrapped is represented by a pair of integers $(p_1, p_2)$. The numbers $p_1$ and $p_2$ denote the coefficients of the linear combination of the unit vectors $\vec{a}_1$ and $\vec{a}_2$ such that the vector $p_1\vec{a}_1 + p_2\vec{a}_2$ joins pairs of identified points. 
We can always assume that $p_1 \ge  p_2$ since we want to avoid the mirror effect.
Figure~\ref{fig:tube} shows  the  construction of a $(4,3)$- infinite (hexagonal) tube. 

\begin{figure}[htb]
\begin{center}
		      \includegraphics[scale=0.8]{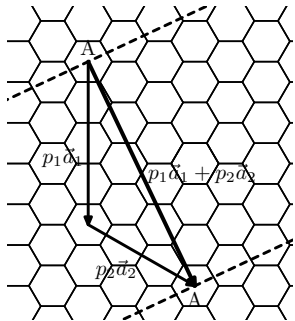}
\end{center}
\caption{Construction of  the cylindrical part of a $(4,3)$-nanotube. The hexagons with the same name overlap.}
\label{fig:tube}
\end{figure}




Denote by $\mathrm{Tran}_{\vec{a}}A$   a translation of an object $A$ for a vector $\vec{a}$.
Let $h_0$ be a hexagon of the infinite (hexagonal) tube. The set $\{h_0,h_1,\dots,h_{p_1+p_2-1}\}$ of hexagons with $h_i = \mathrm{Tran}_{\vec{b_i}}h_0$, where $\vec{b_i} = i\vec{a_1}$
for $0\le i \le p_1$ and $\vec{b_i} = p_1\vec{a_1} + (i - p_1)\vec{a_2}$ for $p_1 < i \le p_1 + p_2$, is called \emph{characterizing ring}.
From the definition of the $(p_1, p_2)$ infinite hexagonal tube, it follows that hexagons $h_0$ and $h_{p_1+p_2}$ are overlapping, thus identified, hence $h_0 = h_{p_1+p_2}$.  Notice that the characterizing ring can be defined differently as shown on Figure~\ref{fig:ring} -- the order of $p_1$ occurrences of the vector $\vec{a}_1$, and $p_2$ occurrences of the vector $\vec{a}_2$ can be arbitrary.

An infinite tube (defined earlier) can be considered as an union of consecutive characterizing rings $R_i$ with $i\in \mathbb{Z}$.
Let a {\it tube}  be a subgraph of an infinite (hexagonal) tube build by a finite number of consecutive characterizing rings.

\begin{figure}[htp!]
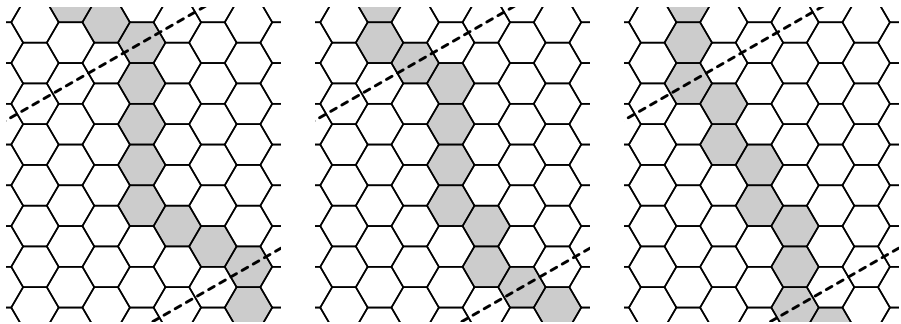

	\centerline{
		\begin{tabular}{ccc}
             \includegraphics[scale=0.8]{rings.1}&
             \includegraphics[scale=0.8]{rings.2}&
             \includegraphics[scale=0.8]{rings.3}			
		\end{tabular}}
	\caption{All possible types of characterizing rings for a $(4,3)$-infinite (hexagonal) tube. The shaded hexagons on the dashed lines are overlapping. }
	\label{fig:ring}
\end{figure}

\emph{Nanotubical graphs} or simply \emph{nanotubes} are fullerene graphs decomposable into a cylindrical part (a tube) and two patches (the \emph{caps}) containing six pentagons each. The cylindrical part of the nanotube is a subgraph of a $(p_1, p_2)$-infinite hexagonal tube for certain $(p_1,p_2)$, and therefore the nanotube fullerene is also called a $(p_1, p_2)$-nanotube.
In that fashion Figure~\ref{fig:tube} also shows  the construction of the tube of a $(4,3)$-nanotube.


Among the possible stability predictors and molecular descriptors for fullerene graphs there are invariants like the number of perfect matchings~\cite{KKMS, LP, EKKKN}, bipartite edge frustration, the independence number, the saturation number, etc. A \emph{bipartite edge frustration} of a graph $G$, denoted by $\varphi(G)$, is the smallest cardinality of a subset of $E(G)$ that needs to be removed from $G$ in order to obtain a bipartite spanning subgraph.
A set $I\subseteq V (G)$ is \emph{independent} if no two vertices from $I$ are adjacent in $G$. The cardinality
of any largest independent set in $G$ is called the \emph{independence number} of $G$, and it is denoted by $\alpha(G)$. For more results concerning independence number and bipartite edge frustration
see~\cite{D}.

The saturation number is a structural invariant directly related to matchings. A \emph{matching} in $G$ is a collection $M$ of edges of $G$ such that no two edges from $M$ have a vertex in common. If a matching $M$ covers all vertices of $G$ we say that $M$ is a \emph{perfect matching}. The perfect matching in cubic, and therefore in fullerene graphs was studying in number of different papers. At first L\'{o}vasz-Plummer~\cite{LP} conjectured that there is a $c > 0$ such that every bridgeless cubic graph has at least $2^{cn}$ perfect matchings, where $n$ is the
number of nodes of the graph. This conjecture was proven by Esperet et al.~\cite{EKKKN}. The problem of perfect matching was also studied by Chudovsky and Seymour~\cite{CS}

A  matching that cannot be improved by adding an edge is called a \emph{maximal matching}.
The \emph{saturation number} of $G$ is the cardinality of any smallest maximal matching of $G$.  We say that each edge {\it dominates} its adjacent edges.  A{ {\it independent edge dominating set} is a dominating set in which no two edges are adjacent. The cardinality of the independent dominating set  of a graph $G$ is its saturation number $s(G)$.  Finding the independent dominating set of a graph is an NP-hard problem~\cite{YG}. 


\section{Upper bound on the saturation number}

In this section we improve the upper bound on the saturation number in fullerene graphs. We describe a construction to find a maximal matching of size $n/3 + C\sqrt{n}$, where $C$ is a constant.

First we define a maximal matching on an infinite (hexagonal) tube of type $(p_1,p_2)$, $p_1\ge p_2$ and $p_1>0$.
\begin{pro}
\label{pro:inftube}
    There is a maximal matching $M_0$ on any infinite tube $G_0$ such that from each hexagon precisely two vertices are not covered by $M_0$.
\end{pro}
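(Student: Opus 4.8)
The plan is to build $M_0$ as a periodic pattern on the infinite planar hexagonal grid and then pass to the quotient that defines the tube. First I would record the forced density. In the hexagonal grid each vertex lies on three hexagons, so there are exactly two vertices per hexagon; requiring precisely two uncovered vertices in every hexagon therefore forces the exposed set $U=V(G_0)\setminus V(M_0)$ to have density $\tfrac13$ among the vertices, and hence $M_0$ to cover a $\tfrac23$-fraction. Equivalently, I must produce an independent set $U$ meeting every hexagon in exactly two vertices together with a perfect matching of $G_0-U$. The independence of $U$ is exactly the statement that $M_0$ is maximal, since a matching is maximal precisely when its set of exposed vertices is independent; this is the only place maximality needs to be checked.

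Next I would exhibit an explicit repeating template. Because the density of $U$ must be $\tfrac13$, no pattern invariant under the full translation lattice $\Lambda=\mathbb{Z}\vec{a}_1+\mathbb{Z}\vec{a}_2$ can work, as such a pattern forces an integer number of exposed vertices in the two-vertex fundamental cell, giving density $0$, $\tfrac12$, or $1$. I would instead use the index-three $\sqrt3\times\sqrt3$ sublattice $\Lambda'\subset\Lambda$, whose fundamental region carries six vertices, i.e.\ three hexagons. On this enlarged cell I choose in each hexagon the two exposed vertices and pair the remaining four into two matching edges so that (a) each vertex is covered at most once, (b) exactly two vertices of each hexagon stay exposed, and (c) no two exposed vertices are adjacent. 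Verifying (a)--(c) is a finite local check on the $\Lambda'$-cell together with its immediate neighbours, after which $\Lambda'$-invariance propagates the pattern over the whole grid.

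The main obstacle is making the pattern close up around the tube for every pair $(p_1,p_2)$. The tube is the hexagonal grid quotiented by the wrapping translation $\vec{t}=p_1\vec{a}_1+p_2\vec{a}_2$, so the planar template descends cleanly only when $\vec{t}\in\Lambda'$, which holds for exactly one of the three residue classes of $\vec t$ in $\Lambda/\Lambda'\cong\mathbb{Z}/3$ (concretely, for one class of $p_1-p_2$ modulo $3$). For the other two classes the naive pattern disagrees with its own wrapped translate along a single characterizing ring, producing a defect there. I would resolve this by retaining the periodic pattern everywhere except on one transition ring and re-matching the two affected consecutive rings directly: within such a cyclic strip of hexagons one can shift the choice of exposed pairs so that the two-per-hexagon condition and the independence of $U$ are preserved across the seam for any strip length. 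This bounded, residue-dependent repair is where the real work lies; the periodic interior and the maximality conclusion then follow immediately from (a)--(c).
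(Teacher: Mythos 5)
Your reformulation is sound: a matching is maximal exactly when its exposed set $U$ is independent, so it suffices to build an independent $U$ meeting every hexagon in precisely two vertices such that $G_0-U$ has a perfect matching, and your density count ($|U|$ must be one third of the vertices) is correct. The fatal problem is the choice of the $\sqrt3\times\sqrt3$ sublattice $\Lambda'$: \emph{no} valid pattern can be $\Lambda'$-invariant, so the ``finite local check'' you defer can never succeed. Indeed, write $A\cup B$ for the bipartition of the hexagonal grid; a $\Lambda'$-invariant set is a union of the six vertex orbits $A_0,A_1,A_2,B_0,B_1,B_2$, and density $1/3$ forces $U$ to be exactly two of them. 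For this particular sublattice, the three neighbours of any vertex pairwise differ by $\pm\vec{a}_1$, $\pm\vec{a}_2$ or $\pm(\vec{a}_1-\vec{a}_2)$, none of which lies in $\Lambda'$; hence every vertex has one neighbour in \emph{each} of the three opposite-side orbits, so every pair $\{A_i,B_j\}$ is joined by edges, and an independent $U$ must be, say, $A_1\cup A_2$. But then every $B$-vertex has a unique black neighbour (its neighbour in $A_0$), and the three $B$-neighbours of each $A_0$-vertex compete for it, so no matching whatsoever covers $B$; the uncovered $B$-vertices are then extra exposed vertices adjacent to $U$, a contradiction. The two-per-hexagon pattern simply cannot have full hexagonal translational symmetry: one needs an anisotropic index-3 sublattice such as $\langle\vec{a}_1,3\vec{a}_2\rangle$ (for which valid patterns do exist), but such a lattice contains $\vec{t}=p_1\vec{a}_1+p_2\vec{a}_2$ only for one residue of $p_2$, and when $p_1\equiv p_2\not\equiv 0\pmod 3$ no index-3 lattice containing $\vec{t}$ admits any pattern at all.

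The second gap compounds the first: the seam repair, which you yourself flag as ``where the real work lies,'' is only asserted, never constructed, and the rigidity of the condition (every hexagon touched by the seam must still have exactly two exposed vertices, with independence preserved) is precisely what makes such surgery delicate; given the obstruction above there is in fact no correct periodic pattern to repair in the first place. The paper avoids both issues by never working in the planar grid: it defines the matching directly on the tube, ring by ring, in blocks of three characterizing rings --- in the first ring every hexagon gets its $a_1$-edge black, in the second every hexagon gets its $a_2$-edge black, and the matching on the third ring is then determined by its two neighbouring rings. Because the rule is stated per hexagon within each ring, it closes up around the circumference for every type $(p_1,p_2)$ with no residue-class distinction, and the axial direction is infinite, so no closure condition arises there either. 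That uniformity over all $(p_1,p_2)$ is exactly what your planar-quotient scheme fails to deliver.
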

\begin{proof}
    We provide a construction of a maximal matching $M_0$ of the infinite tube $G_0$ of the type $(p_1,p_2)$, $p_1\ge p_2$, $p_1>0$. 
    We call the edges of $M_0$ \emph{black} edges; we also call the vertices covered by $M_0$ \emph{black} vertices. The vertices not covered by $M_0$ form an independent set; we call them \emph{white} vertices.

    For each hexagon $h_i$ we call the common edges with the adjacent hexagons in the direction $\vec{a}_1$ and $-\vec{a}_1$, an {\it $a_1$-edge} and {\it  $-a_1$-edge}, respectively. Similarly, we name the common edges with the adjacent hexagons in direction $\vec{a}_2$ and $-\vec{a}_2$ (see Figure~\ref{fig:Ring}$(a)$).

    We choose a  characterizing ring $R_1:\, h_0,h_1,\dots,h_{p_1+p_2-1},h_{p_1+p_2}=h_0$ of $G_0$. For each hexagon $h_i$ we color the $a_1$-edge black; white vertex will be the vertex  incident to a $a_2$- or $-a_2$-edge  which is not black yet, see Figure~\ref{fig:Ring}$(a)$. For the hexagons in the next characterizing ring $R_2$ we propagate similar pattern; for each hexagon we color black the $a_2$-edge. White vertices are all the remaining vertices of the ring that are not colored black yet, see Figure~\ref{fig:Ring}$(b)$.

\begin{figure}[htp!]
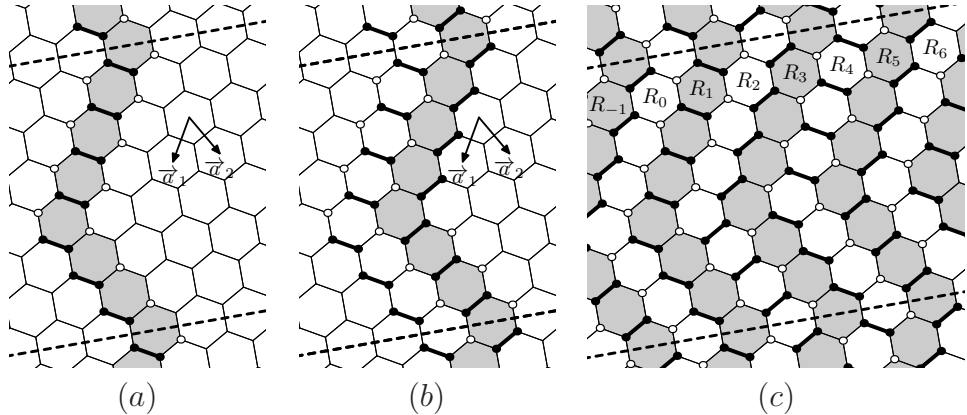

\label{fig:Ring}
	\begin{center}
			\begin{tabular}{ccc}
    \includegraphics[scale=0.8]{saturation.1}\qquad&
    \includegraphics[scale=0.8]{saturation.2}&
    \includegraphics[scale=0.8]{saturation.3}\\
$(a)$& $(b)$& $(c)$ 
			\end{tabular}
    \end{center}
	\caption{Defining a matching on a $(4,3)$ tube. The dashed lines are overlapping, $a_1$ and $a_2$ edges, named after the corresponding vectors (green and red respectively). $(a)$ Defining a matching on the first ring (shaded). $(b)$  Defining a matching on the second ring (shaded). $(c)$ Maximal matching covering exactly four vertices from each hexagon. The characterizing rings are alternately white and shaded. }
\end{figure}

 For a moment we skip defining a matching on the characterizing ring $R_3$. Instead we define a matching on the next ring $R_4$, in the same fashion as the matching on $R_1$. Now, the matching on $R_2$ and $R_4$ defines the matching on $R_3$. We extend this matching to the tube in the following way: the matching on the ring $R_k$, $k\in \mathbb{Z}$ is defined in the same way as the matching on the characterizing ring $R_j$, $j=0,1,2$ if $k\equiv j\, ({\rm  mod }\, 3)$.


    This way we obtain a desired maximal matching on an infinite (hexagonal) tube. See Figure~\ref{fig:Ring}$(c)$ for illustration.
\end{proof}


In~\cite{DLS}, Dvo\v r\'ak et al.~were investigating how many edges must be removed from a fullerene graph such that the
new graph is bipartite. It is clear that from each pentagon an edge must be removed, but that still does not give a bipartite graph since odd cycles still exist.
They found out that at most $O(\sqrt{n})$ edge must be removed in order to get a bipartite graph.

\begin{te}\label{Thm:bipart}\cite{DLS}
    If $F$ is a fullerene graph with $n$ vertices, then $\varphi (F) = O(\sqrt n)$.
\end{te}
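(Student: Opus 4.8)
The plan is to pass to the planar dual and reduce the problem to a minimum-weight join. Embed $F$ in the sphere and let $F^*$ be its dual. A set $D\subseteq E(F)$ makes $F$ bipartite (destroys every odd cycle) if and only if, viewed as a set of dual edges, $D$ is a $T$-join in $F^*$, where $T$ is the set of faces of $F$ bounded by an odd cycle; this is the classical planar duality between bipartizing edge sets and joins of the odd faces. In a fullerene the only odd faces are the twelve pentagons, so $T$ has exactly $12$ (an even number of) vertices, and
$$\varphi(F)=\min\{\,|J| : J \text{ is a } T\text{-join in } F^*\,\}.$$
Hence it suffices to exhibit a single $T$-join of size $O(\sqrt n)$.

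To build a cheap $T$-join I would pair the twelve pentagons into six pairs and, for each pair, take a shortest dual path (a sequence of faces in which consecutive faces share an edge) joining them; the symmetric difference of the six paths has odd degree exactly at the twelve terminals, so it is a $T$-join, and therefore $\varphi(F)$ is at most the total length of the six paths. The length of a dual path between two faces is the number of edges it crosses, i.e.\ a face-distance in $F^*$. The engine for bounding these distances is a quadratic growth (isoperimetric) estimate: \emph{a disk-like patch with at most six pentagons and $k$ faces has face-diameter $O(\sqrt k)$}. The curvature input is precisely Lemma~\ref{lemma:patch}: such a patch with boundary $B$ satisfies $n_2(B)-n_3(B)=6-p\ge 0$, i.e.\ its boundary carries at least as many degree-$2$ (convex) vertices as degree-$3$ ones, which forces each successive ring around an interior face to grow (or at least not shrink) and hence the enclosed number of faces to grow quadratically in the radius. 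Combined with the concavity of $\sqrt{\cdot}$, if the pairs are chosen inside regions carrying $k_1,\dots,k_6$ faces with $\sum_i k_i\le |F^*|=O(n)$, the total cost is $O\!\left(\sum_i\sqrt{k_i}\right)=O(\sqrt n)$.

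The delicate point—and the main obstacle—is making the $O(\sqrt n)$ bound uniform over \emph{all} fullerenes, and in particular over nanotubes, where the naive approach fails: in a long thin $(p_1,p_2)$-tube two pentagons lying in opposite caps are at face-distance $\Theta(n)$, so one must never pair across the tube. The resolution is to first show that the twelve pentagons cluster into at most a few disk-like regions—for a nanotube, the two caps, six pentagons each—so that the growth estimate of the previous paragraph applies inside each region and yields a per-pair cost $O(\sqrt{k_i})$ with $\sum_i k_i\le n$. Formalizing ``the pentagons split into disk-like regions'' uniformly, i.e.\ producing the correct six regions for an arbitrary fullerene without a full structural classification of its shape, is where the work lies; the cleanest route is to extract the six connections directly as short separating cycles (nested moats enclosing groups of pentagons) obtained from the isoperimetric estimate and then convert them into the desired $T$-join, rather than arguing by cases on the global geometry of $F$.
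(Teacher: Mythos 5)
Two things, and they interact. First, the paper you were asked to match does not actually prove this statement: Theorem~\ref{Thm:bipart} is quoted verbatim from \cite{DLS}, so the only meaningful comparison is with the proof in \cite{DLS} --- whose skeleton (pass to the dual, realize $\varphi(F)$ as a minimum $T$-join over the twelve odd faces, bound it by pairing pentagons along short dual paths, with an isoperimetric growth estimate as the engine) your proposal does reproduce. Your reduction to $T$-joins is correct, and your diagnosis of the nanotube obstruction (never pair across the tube) is exactly the right worry.

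Second, there is a genuine gap: your stated engine, \emph{``a disk-like patch with at most six pentagons and $k$ faces has face-diameter $O(\sqrt k)$''}, is false. Take a nanotube cap together with an arbitrarily long hexagonal tube attached to it: this is a patch with $p=6$ and $k$ faces whose face-diameter is $\Theta(k)$. The reason is visible in Lemma~\ref{lemma:patch} itself: for $p=6$ the boundary satisfies $n_2(B)=n_3(B)$, the curvature deficit is zero, and successive rings need not grow at all --- ``grow (or at least not shrink)'', as you put it, yields only linear area growth, not quadratic. The estimate is valid only for $p\le 5$, where the deficit $6-p\ge 1$ forces rings to lengthen. This is not cosmetic, because the regions you propose to work inside (the caps, six pentagons each) are precisely the patches for which the engine fails, so your per-region cost bound $O(\sqrt{k_i})$ has no support. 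The repair --- which is what \cite{DLS} does, and what the present paper imports as the Six Pentagons Lemma (Lemma~\ref{c.10}) --- is to apply the growth estimate only while a ball around a pentagon contains at most five pentagons, concluding that every pentagon has at least five others within dual distance $O(\sqrt n)$; then take the transitive closure of that closeness relation. Each class contains at least six pentagons, so there are at most two classes and the class sizes are $12$ or $6+6$ --- in particular \emph{even}, which is what makes pairing within classes possible at all (a parity point your write-up never addresses), and any two pentagons in one class are linked by a chain of at most $11$ short hops, giving six pairs of cost $O(\sqrt n)$ each and hence a $T$-join of size $O(\sqrt n)$. So: right strategy, but the five-versus-six distinction and the parity-of-clusters step are exactly the missing content, and they are what the Six Pentagons Lemma exists to supply.
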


Even more, in the same paper they showed that for every pentagon in a fullerene graph, there exist at least five other pentagons at total distance $O(\sqrt n)$.

\begin{lemma}[Six Pentagons Lemma]\cite{DLS}
\label{c.10}
For every pentagonal face $f$ in a fullerene graph $F$ with $n$ vertices, there exist at least five other pentagonal faces whose distance to $f$ in the dual $F^*$ is at most $\sqrt{63n/2}+14$.
\end{lemma}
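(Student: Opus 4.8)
The plan is to argue by contradiction: suppose that at most four pentagons other than $f$ lie within dual distance $r:=\lceil\sqrt{63n/2}+14\rceil$ of $f$, and derive that $F$ would then have to contain more than $n$ vertices. The engine of the argument is a discrete isoperimetric/curvature estimate driven by Lemma~\ref{lemma:patch}: a patch carrying strictly fewer than six pentagons has a boundary that is, on average, convex, and hence must keep expanding as we grow it outward.

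Concretely, for $0\le i\le r$ let $P_i$ be the subgraph of $F$ consisting of $f$ together with all faces whose dual distance to $f$ is at most $i$. For large $n$, and with so few pentagons nearby, the region $P_i$ is a topological disk, hence a patch in the sense of Section~2, with boundary cycle $B_i$ of length $\ell_i$; let $p_i\le 5$ be its number of pentagons. Lemma~\ref{lemma:patch} gives
$$
n_2(B_i)-n_3(B_i)=6-p_i\ge 1,
$$
so among the $\ell_i=n_2(B_i)+n_3(B_i)$ boundary vertices the degree-$2$ (convex) ones strictly outnumber the degree-$3$ ones. Reading this as a statement about the total turning of the boundary walk (discrete Gauss--Bonnet), the curve $B_i$ is convex on average and cannot contract; in passing to $P_{i+1}$ it must expand, and a quantitative version of this shows that both $\ell_i$ and the number of faces in the ring $P_{i+1}\setminus P_i$ grow at least linearly in $i$.

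Summing these linear lower bounds over $i=0,\dots,r$ yields a quadratic lower bound, of the form $c\,r^2$, on the number of faces---and hence on the number of vertices---of $P_r$. Calibrating the bookkeeping so that this count exceeds $n$ precisely once $r>\sqrt{63n/2}+14$ contradicts the fact that $F$ has only $n$ vertices (equivalently $n/2+2$ faces). This forces a fifth pentagon distinct from $f$ to appear within the claimed distance, which is the assertion of the lemma.

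The main obstacle is the growth estimate itself. One must (i) guarantee that each $P_i$ really is a patch, i.e.\ that the dual ball does not wrap around the sphere or pinch into a non-disk before radius $r$---this is where the hypothesis of few nearby pentagons and the largeness of $n$ enter, possibly after cutting $F$ along a short cycle into two patches as in Section~2; and (ii) make the ``convex surplus forces expansion'' step quantitative in the presence of up to five pentagons, each contributing positive curvature that can locally slow, but (being fewer than the six needed for a balanced boundary with $n_2=n_3$) never halt, the growth. Since the target constant $63/2$ is far from tight---the purely hexagonal honeycomb already grows quadratically in its vertex count with a much larger constant than the $\approx r^2/31.5$ actually required---a deliberately lossy version of this estimate suffices, which is exactly what makes the clean closed-form bound attainable.
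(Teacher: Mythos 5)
First, a point of reference: the paper does not prove this lemma at all --- it is imported verbatim from \cite{DLS}, so there is no in-paper proof to compare you against. Your plan does correctly identify the general shape of the argument behind the cited result: grow dual balls around $f$, use Lemma~\ref{lemma:patch} to convert a deficit of pentagons into a convex surplus $n_2-n_3=6-p\ge 1$ on the boundary, and turn that surplus into area growth that eventually overruns the $n/2+2$ faces of $F$. In that sense the skeleton is right.

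However, the two items you list at the end as ``obstacles'' are not finishing touches; they are the lemma, and your proposed way around the first one does not work. You hope that ``largeness of $n$ and fewness of nearby pentagons'' guarantee each $P_i$ is a disk. That is false: a dual ball in a fullerene can wrap around a tube-shaped portion of the graph and pinch into an annulus while still containing at most five pentagons; the remaining pentagons then sit in the two complement components. This is fatal to the argument as you state it, because it also destroys step (ii): once the ball wraps a tube whose far side carries exactly six pentagons, the region on $f$'s side is a patch with six pentagons, its boundary surplus is $6-6=0$, and the rings stop growing --- each subsequent ring has constant size equal to the tube circumference, which can be bounded by a constant. Then the total face count grows linearly in $i$, never reaches $n/2+2$ for $i\le\sqrt{63n/2}+14$, and no contradiction appears; long thin nanotubes show this linear regime genuinely occurs. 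The reason the lemma nevertheless holds is that this stalling regime can only set in when \emph{six} pentagons already lie on $f$'s side, which is precisely what the contradiction hypothesis forbids: with at most five pentagons on the ball's side, the unique complement component containing at least seven pentagons (unique, since the components' pentagon counts sum to $12-p\le 11$) always presents, via Lemma~\ref{lemma:patch} applied to the disk on $f$'s side, a boundary with surplus $6-p\ge 1$, so expansion toward it persists; and any pinched-off component with at most five pentagons is swallowed within a bounded number of further steps, at which point its pentagons must be charged against the hypothesis. This case analysis and bookkeeping --- which is where constants like $\sqrt{63/2}$ and the $+14$ actually come from --- is absent from your plan, and the sentence ``a quantitative version of this shows that both $\ell_i$ and the number of faces grow at least linearly'' asserts, rather than proves, exactly the statement at issue. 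So: right strategy, but what you have is a plan whose unproven steps coincide with the content of the lemma, and whose stated resolution of the topological difficulty is incorrect.
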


Using this result we prove the following.

\begin{te}
    Let $F$ be a fullerene graph on $n$ vertices. Then
    $$
    s(F)\le \frac n3 + O(\sqrt{n}).
    $$
\end{te}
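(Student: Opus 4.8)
The plan is to reduce an arbitrary fullerene to the tube situation already handled by Proposition~\ref{pro:inftube}. First I would use the Six Pentagons Lemma (Lemma~\ref{c.10}) to pin down the location of the twelve pentagons: since every pentagon has five further pentagons within dual-distance $O(\sqrt n)$, the pentagons organize into at most two clusters, each of diameter $O(\sqrt n)$ in $F^*$. I would then produce cutting cycles of length $O(\sqrt n)$ in $F$ separating these clusters, decomposing $F$ into a cylindrical hexagonal part $T$ (a \emph{tube}) together with the pentagon-bearing regions. By Lemma~\ref{lemma:patch}, each such region is a patch whose boundary satisfies $n_2=n_3$ exactly when it carries six pentagons, which is the balanced configuration I would aim for so that the cut cycles look like ordinary rings of a hexagonal tube.

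Second, on the tube $T$ I would apply the maximal matching $M_0$ from Proposition~\ref{pro:inftube}, which leaves precisely two uncovered (white) vertices in each hexagon. A hexagonal tube on $V$ vertices has about $V/2$ hexagons, so the white vertices number roughly $V/3$ and $M_0$ covers about $2V/3$ of them; hence $|M_0|\approx V/3$. Since $V=n-O(\sqrt n)$, this already supplies $n/3-O(\sqrt n)$ edges. The pentagon-bearing caps and the two seams where $T$ meets them have boundaries of total length $O(\sqrt n)$, so they can be matched by a greedy maximal matching costing only $O(\sqrt n)$ further edges, giving the claimed total $n/3+O(\sqrt n)$.

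The delicate point is to glue these pieces into a \emph{single} matching of $F$ that is maximal, i.e.\ whose uncovered vertices form an independent set. Along each seam a white vertex of the tube may become adjacent to a white vertex of a cap; I would repair every such conflict locally by adding one edge, and since each seam has length $O(\sqrt n)$ only $O(\sqrt n)$ repairs arise. Here the bipartite edge frustration bound $\varphi(F)=O(\sqrt n)$ of Theorem~\ref{Thm:bipart} can be invoked to guarantee that the periodic, essentially bipartite pattern of $M_0$ fails to close up only across an $O(\sqrt n)$-sized set of edges, bounding the total number of corrections and keeping the added edges within the error term.

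The main obstacle I expect is precisely this decomposition step: proving that an \emph{arbitrary} fullerene, not merely a nanotube, admits cutting cycles of length $O(\sqrt n)$ yielding a genuinely hexagonal tube and caps whose contribution stays within the $O(\sqrt n)$ budget. For nearly spherical fullerenes the diameter is itself $O(\sqrt n)$, so the ``tube'' may be short and the caps large; there I would instead apply the $M_0$-pattern directly to the near-hexagonal bulk and absorb all twelve pentagons into the $O(\sqrt n)$ correction, again using Lemmas~\ref{lemma:patch} and~\ref{c.10} to confine the defect regions. Throughout, the recurring technical burden is maintaining maximality, since every local modification risks creating a pair of adjacent white vertices that must then be re-matched.
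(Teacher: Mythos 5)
Your overall strategy is the same as the paper's --- cluster the pentagons via Lemma~\ref{c.10}, cut the clusters out, apply the tube matching of Proposition~\ref{pro:inftube} to the hexagonal remainder, repair along the seams --- but the step you yourself flag as ``the main obstacle'' is not a side issue: it is the technical heart of the paper's proof, and your proposal leaves it open. The paper closes it as follows. For each cluster one takes a tree $T_i^*$ in the dual spanning the six pentagonal faces; the union $Q_i$ of the faces corresponding to its vertices is a patch with boundary cycle $C_i$, and Lemma~\ref{lemma:patch} gives $n_{i,2}=n_{i,3}$ on $C_i$ precisely because $Q_i$ contains six pentagons. Reading the degree sequence along $C_i$ as a sequence of left and right turns in the hexagonal grid, equality of the two counts forces the developed boundary to close up to a translation, and that translation vector \emph{is} the characterizing vector $(p_1,p_2)$ of a nanotube. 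Hence the pentagon-free part $G$ of an \emph{arbitrary} fullerene is a subgraph of the infinite tube $G_0$ of that type and simply inherits the globally defined matching $M_0$; in case (A) every fullerene is treated as a nanotube whose tube may be arbitrarily short, so no separate ``nearly spherical'' regime exists. When all twelve pentagons lie in one cluster the balance fails (one gets $b_2+w_2=b_3+w_3-6$ instead of (\ref{eq:BW})), and the paper handles this by embedding the hexagonal patch $G=F-T$ into an infinite tube of sufficiently large circumference and redoing the count. Your fallback for this case --- ``apply the $M_0$-pattern directly to the near-hexagonal bulk'' --- is exactly the assertion that requires proof, since a priori a region with hexagonal faces need not map to the hexagonal lattice or a tube in a way compatible with a periodic matching.

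A second defect: your appeal to Theorem~\ref{Thm:bipart} does no work. Bipartite edge frustration never enters the paper's argument, and it is not what guarantees that the pattern ``closes up''; closure around the circumference is built into Proposition~\ref{pro:inftube}, and longitudinally the pattern lives on an infinite tube, so nothing needs to close. What actually bounds the repairs is that all white--white adjacencies of the inherited matching occur among vertices incident to the (at most two) exceptional faces of $G$ together with $R_1\cup R_2$, a set of size $O(\sqrt n)$ by Lemma~\ref{c.10}; the paper then needs the double-counting identities (\ref{eq:BW}) and (\ref{eq:t}), plus the inequality $w_2+2w_3\le 2b_2+2w_2$, to convert ``one repair edge per conflict'' into the clean bound $|M|\le n/3+t-2$. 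Without that accounting, your $O(\sqrt n)$ error term is asserted rather than proved.
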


\begin{proof}
Let $F$ be a fullerene graph. On the set of the twelve pentagonal faces of $F$ we consider the transitive closure $\sim$ of the relation "the distance between $f_1$ and $f_2$ in the dual is at most $\sqrt{63n/2}+14$". By Six Pentagons Lemma, for each pentagonal face $f$ there are at least five other pentagonal faces $f^\prime$ such that $f\sim f^\prime$. Therefore, depending on the fullerene structure, two cases are possible:
\begin{itemize}
    \item[(A)] \textit{There are two equivalent classes with respect to $\sim$;}
    \item[(B)] \textit{There is just one class with respect to $\sim$ containing all the twelve  pentagons of $F$.}
\end{itemize}
We first prove the theorem for case (A), and later we consider the case (B).
\begin{itemize}
\item[(A)] \textit{There are two equivalent classes with respect to $\sim$.} This means there are two 6-tuples of pentagons ``far away" from each other.
    In this case we find two trees $T^*_1$ and $T^*_2$ in the dual $F^*$ of $F$, each covering the corresponding six pentagonal faces. Such a tree always exists: it suffices to  choose one vertex of degree 5, and using breadth-first search find the shortest paths to the other five vertices of degree 5. The union  of these is a desired tree in $F^*$. Let $T_i$ be the set of edges in $F$ corresponding to the edges of $F^*$ with both endvertices in $T^*_i$, $i=1,2$. Observe that $T_i$ may contain (a bounded number of) edges corresponding to edges in $F^*$ which are not edges of the tree $T_i^*$. Among all the trees possible, for $T_i^*$ we choose one with as few edges in $T_i$ as possible. The overall number of vertices of $T_1^*\cup T_2^*$ by Lemma~\ref{c.10} is at most $10(\sqrt{63n/2}+14)$.

    Let $Q_i$ be the graph obtained as a union of the boundary cycles for the faces of $F$ corresponding to the vertices of $T_i^*$. As $T_i^*$ covers exactly 6 vertices of degree 5, $Q_i$ is a fullerene patch containing exactly 6 pentagons. Let $C_i$ be the boundary cycle of $Q_i$ (the binary sum of the boundaries of its faces). Observe that $C_i$ is connected since $T_i^*$ is a tree in $F^*$.

On the other hand, it is easy to see that each vertex of $F$ is incident to 0, 1 or 3 edges in $T_i$ (otherwise there would be two vertices in $T_i^*$ joined by an edge not corresponding to an edge of $T_i$). Let $R_i$ be the set of vertices incident to 3 edges in $T_i$. The graph $P_i=F - ( T_i\cup R_i)$ is another fullerene patch containing exactly 6 pentagons. Clearly, $P_i\cap Q_i=C_i$. See Figure \ref{fig:A} for an illustration of $T_i$, $C_i$, $Q_i$ and $P_i$.
Finally, let $G=F - (T_1\cup R_1) - (T_2\cup R_2)$ be the subgraph of $F$ not containing any pentagons. Clearly all the faces of $G$ but two are hexagons.

\begin{figure}[htp!]
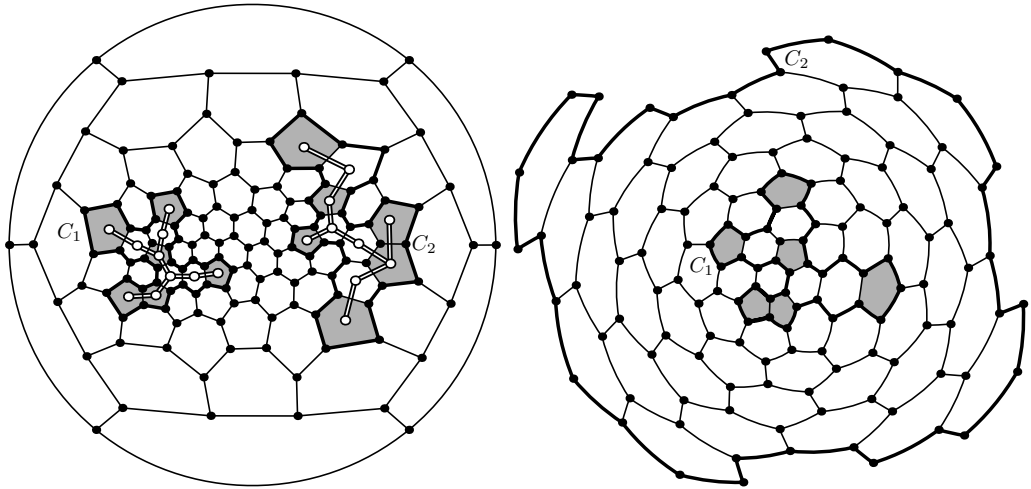

\centerline{
\includegraphics[scale=0.8]{A.0}
\includegraphics[scale=0.8]{Atube.1}
}
\caption{An example of a fullerene graph in which the pentagonal faces form two clusters of six. The boundary of the union of faces corresponding to the dual tree $T_i$ is the cycle $C_i$,  $i=1,2$ (left). The patch $P_2$ contains the patch $Q_1$ (right).}
\label{fig:A}
\end{figure}

For each vertex $v$ of $C_i$, either $d_{P_i}(v)=2$ and $d_{Q_i}(v)=3$ (if $v$ is incident to an edge of $T_i$), or $d_{P_i}(v)=3$ and $d_{Q_i}(v)=2$ (if $v$ is not incident to any edge of $T_i$). Let $n_{i,2}$ and $n_{i,3}$ be the numbers of vertices from $C_i$ of degree 2 and 3 in $Q_i$. Since both $P_i$ and $Q_i$ have exactly six pentagons, by Lemma \ref{lemma:patch} we have $n_{i,2}=n_{i,3}$.

The patch (with six pentagons) $Q_i$ is a cap of some nanotube. The type of the nanotube having $Q_i$ as a cap can be determined in the following way: Let $v_1,v_2,\dots,v_k$ be the vertices of $C_i$ in a cyclic order ($v_{k+1}=v_1$, $v_0=v_k$). If $d_{Q_i}(v_j)=2$, then the edges $v_{j-1}v_j$ and $v_{j}v_{j+1}$ are incident to two different added hexagons; towards the tube they form a 240 degree angle. If $d_{Q_i}(v_j)=3$, then the edges $v_{j-1}v_j$ and $v_{j}v_{j+1}$ are incident to the same added hexagon; they form a 120 degree angle. Informally speaking, for each vertex, the direction difference between the vectors $v_{j-1}v_j$ and $v_{j}v_{j+1}$ is either a `left turn' or a `right turn'.


If we choose the first edge $v_1v_2$ on the infinite hexagonal grid, the sequence of degrees of vertices of $C_i$ fully determines the position of all the other vertices. Since there is the same number of vertices of degree 2 and 3 on $Q_i$, there is the same number of left and right turns, so the edges $v_0v_1$ and $v_kv_{k+1}$ are equally oriented. Since on the tube $v_0=v_k$ and $v_{k+1}=v_1$, the difference $v_k-v_0=v_{k+1}-v_1$  on the infinite hexagonal grid determines the characterizing vector of the tube, see Figure \ref{fig:Atube} for illustration.

\begin{figure}[htp!]
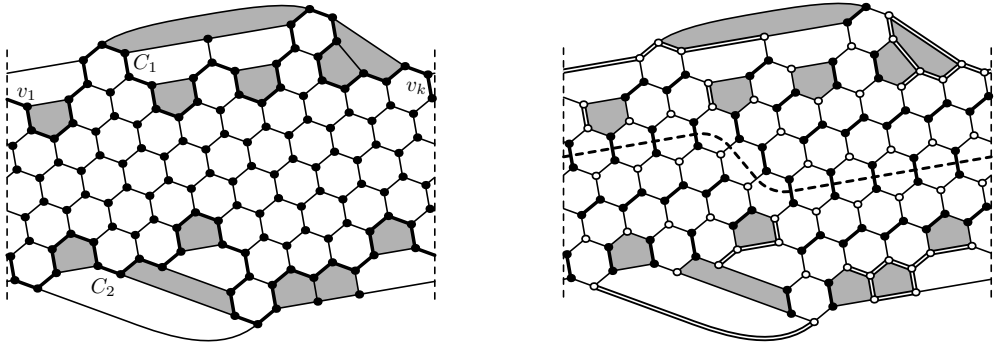

\centerline{
\includegraphics[scale=0.8]{A.2}
\hfil
\includegraphics[scale=0.8]{A.3}
}
\caption{The graph from Figure \ref{fig:A} drawn in such a way that the part containing only hexagons separating the two patches $Q_1$ and $Q_2$ containing pentagons is embedded into a nanotube (left). The difference $v_k-v_0$ on an infinite hexagonal grid determines the type of the nanotube. The graph from Figure \ref{fig:A} is a $(9,2)$-nanotube. The matching $M_1$ inherited from the indefinite hexagonal tube (right). The dashed line represents a characterizing ring.}
\label{fig:Atube}
\end{figure}

Since $C_1$ is the boundary of both $P_1$ and $Q_1$, and the type of the nanotube is determined solely by $C_1$, they can be considered as two different caps of the same nanotube.

Since $Q_1\subset P_2$ (and $Q_2\subset P_1$), after removing sufficiently large number of hexagons  from $P_2$ (resp. $P_1$) one can obtain $Q_1$ (resp. $Q_2$). The symmetric difference $G=F-(T_1\cup R_1)-( T_2\cup R_2)$ does not contain any pentagons, and therefore $Q_1$ is a cap for the same nanotube as $P_2$ ($Q_2$ as for $P_1$).



Once the type of the tube is determined, we embed $G$ into infinite tube $G_0$ with the predefined matching. By Proposition~\ref{pro:inftube}, there is a maximal matching $M_0$ on $G_0$ such that from each hexagon precisely two vertices are white (not covered by  the matching $M_0$). All the vertices of $G$ inherit the colors from the corresponding vertices of $G_0$.

Let $n_B$ and $n_W$ be the numbers of black and white vertices in $G$, respectively, let $r=|R_1|+|R_2|$.  Then $n=n_B+n_W+r$. Let $b_2$ and $b_3$ ($w_2$ and $w_3$) be the numbers of black (white, respectively) vertices of degree 2 and 3 incident to one of the exceptional faces of $G$. Since both $Q_1$ and $Q_2$ contain exactly 6 pentagons, we have
\begin{equation}
b_2+w_2=b_3+w_3.
\label{eq:BW}
\end{equation}
Let $h$ be the number of hexagonal faces of $G$, let $t$ be the  total number of face s of $Q_1$ and $Q_2$.
Then by double counting the vertices on the patches $Q_1$ and $Q_2$ we get
\begin{equation*}
6t-12=3r+2b_2+b_3+2w_2+w_3,
\end{equation*}
which combined with (\ref{eq:BW}) gives
\begin{equation}
2t-4=r+b_2+w_2.
\label{eq:t}
\end{equation}
Then, by double counting the face-vertex incidences in $G$ we get
\begin{equation*}
3(n_B-b_2-b_3)+b_2+2b_3=4h \quad\textrm{and}\quad
3(n_W-w_2-w_3)+w_2+2w_3=2h,
\end{equation*}
which together with (\ref{eq:BW}) implies
\begin{equation*}
3n_B=2n-2r+b_2-w_2-w_3.
\label{eq:nB}
\end{equation*}

Some of the edges of the matching $M_0$ defined on the infinite tube $G_0$ can have only one endvertex in $G$ and the other one not. This can only happen for black vertices of degree 2 in $G$; let $b_2^\prime$ be the number of them. We recolor those vertices white temporarily. Observe that for each such vertex, its two neighbors in $G$ are both incident to the same exceptional face. Let $M_1$ be the matching of $F$ obtained this way. Clearly, $|M_1|=(n_B-b_2^\prime)/2$. See Figure \ref{fig:Atube} for illustration.

The matching $M_1$ is not necessarily a maximal matching of $F$, however, two white vertices can only be adjacent in $F$ if they are both incident to the same exceptional face of $G$. We make the matching $M_1$ maximal by adding an arbitrary maximal matching of the subgraph of  $F$ induced by the white vertices incident to the two exceptional faces of $G$ and the vertices from $R_1\cup R_2$. This way we obtain a matching $M$ of size
\begin{equation}
|M|\le \frac{n_B-b_2^\prime}2+\frac{r+w_2+w_3+b_2^\prime}2 =\frac{2n+r+b_2+2w_2+2w_3}6.
\label{eq:M}
\end{equation}
In order to determine the upper bound we used the fact that $w_2+w_3\le b_2+b_3$, since in $G_0$, the white vertices of each cycle $C_i$ induce an independent set and relation~\eqref{eq:BW}. Now, we have $w_2+2w_3\le b_2+b_3+w_3\le 2b_2+w_2\le  2b_2+2w_2$, and therefore $r+b_2+2w_2+2w_3\le 3(r+b_2+w_2)$. Plugging the last relation and relation~\eqref{eq:t} into~\eqref{eq:M}, we infer
$$
|M|\le\frac n3+t-2\,.
$$

\item[(B)] \textit{There is just one class with respect to $\sim$ containing all the twelve  pentagons of $F$.} In this case we find a subtree $T^*$ of $F^*$ containing all the vertices corresponding to pentagonal faces of $F$. Let $T$ be the set of edges in $F$ corresponding to the edges of $F^*$ with both endverities in $T^*$. The graph $G=F-T$ is a hexagonal patch. The overall number $t$ of vertices of $T^*$ is at most $11(\sqrt{63n/2}+14)$. We embed $G$ into an infinite tube of the type $(p_1,p_2)$ with $p_1+p_2$ sufficiently large, and follow the same procedure as in the previous case. Observe that here the patch $Q$ (it is only one) has precisely 12 pentagons, and instead~\eqref{eq:BW}, now we have $ b_2+w_2=b_3+w_3-6$. Similarly~\eqref{eq:t}  in this case is $2t-4=r+b_2+w_2+2$. Applying these changes into~\eqref{eq:M}, we can use analogous calculations to prove that this way we find a maximal matching of size at most $n/3+t-4$, what concludes the proof of the theorem.
\end{itemize}
In all the cases we managed to find a maximal matching of size at most $n/3+11(\sqrt{63n/2}+14)-2$, as desired.
\end{proof}


\section{Lower bound on the saturation number}
\label{section:lower}

In this section, we improve the lower bound on the saturation number of fullerene graphs. We show that every maximal matching of a fullerene graph contains at least $ n/3 - 2 $ edges.

\begin{te}
Let $F$ be a fullerene graph on $n$ vertices. Then,
$$
s(F)\ge \frac {n}3-2.
$$
\end{te}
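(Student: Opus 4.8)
The plan is to fix an arbitrary maximal matching $M$ of $F$, set $k=|M|$, and let $I$ be the set of vertices it leaves uncovered. Since a matching is maximal precisely when its uncovered set is independent, $I$ is independent; moreover $M$ is a perfect matching of the subgraph induced by the covered vertices. From $n=2k+|I|$ the target $k\ge n/3-2$ is equivalent to the bound $|I|\le n/3+4$ on the number of uncovered vertices, so it suffices to show that a maximal matching cannot leave more than $n/3+4$ vertices uncovered. I stress at the outset that mere independence of $I$ is not enough: it only yields $|I|\le n/2-2$, so the extra leverage must come from the edges of $M$, i.e.\ from the fact that every covered vertex is matched.

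Next I would record the global counting data. Color the covered vertices black and the uncovered ones white; by Euler's formula $F$ has exactly $12$ pentagons and $n/2-10$ hexagons, and every vertex lies on three faces. For a face $B$ let $w(B)$ be the number of white vertices incident to $B$, and $p(B)$ the number of edges of $M$ on its boundary. Counting vertex--face and edge--face incidences gives $\sum_B w(B)=3|I|$ and $\sum_B p(B)=2k$, so the assignment of the \emph{charge} $2w(B)-3p(B)$ to each face has total
\[
\sum_B\bigl(2w(B)-3p(B)\bigr)=6|I|-6k .
\]
Thus the theorem is exactly the assertion that this (conserved) total charge is at most $36$; equivalently, that the number of edges joining two covered vertices but not lying in $M$ is at least $k/2-9$. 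This reformulation is sharp for the buckminsterfullerene, where it reads $0\ge0$.

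The heart of the argument is then a discharging computation. The only faces with positive initial charge are hexagons carrying three pairwise non-adjacent white vertices (charge $+6$) and pentagons carrying two (charge up to $+4$); in such a ``white-heavy'' face every black boundary vertex has both of its boundary neighbours white, so its matching edge \emph{leaves} the face and is registered as a negative $p$-contribution on the two neighbouring faces that carry it. I would therefore let each white-heavy face push its surplus onto the neighbouring faces holding its outgoing matching edges, and aim to prove a local inequality guaranteeing that, after redistribution, every hexagon has charge at most $0$ and every pentagon has charge at most $3$. Since charge is conserved, summing over the $n/2-10$ hexagons and $12$ pentagons gives total charge at most $3\cdot 12=36$, hence $6|I|-6k\le 36$ and $|I|\le k+6$, which is the claim. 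Lemma~\ref{lemma:patch} (the balance $n_2-n_3=6-p$) is the natural device for converting the count of twelve pentagons into this additive constant.

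The main obstacle is the bookkeeping of the redistribution. A matching edge lies on two faces and may be the outgoing edge of more than one white-heavy face at once, so one must argue that its negative contribution is never counted against two different surpluses; and the pentagons must be handled separately from the hexagons, since their charge budget and their ``turning'' behaviour along the boundary differ. Controlling these shared matching edges and isolating the pentagon contribution so as to obtain exactly the constant $36$ (and hence the clean $n/3-2$) is the delicate part; the remaining steps are routine incidence counting.
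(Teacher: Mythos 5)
Your setup is correct and, in total, identical to the paper's: the face charge $2w(B)-3p(B)$ sums to $6|I|-6k$, and the theorem is indeed equivalent to this total being at most $36$ (the paper proves the very same inequality with signs reversed, giving black vertices $+3$, white vertices $-6$, pentagons $+3$, and showing the total is non-negative). However, the discharging itself --- which is the entire substance of the proof --- is missing from your proposal, and the two concrete claims you do make about it are false, so the plan as described cannot be completed. First, the positively charged faces are not only the ``white-heavy'' ones. A hexagon incident to two opposite white vertices whose four black vertices are all matched by edges leaving the hexagon has $w=2$, $p=0$, hence charge $+4$; a hexagon with one white vertex and $p=0$ has charge $+2$; analogous pentagons have charge $+2$. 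Since your rules move charge only \emph{out of} white-heavy faces and \emph{into} other faces, such a hexagon ends with charge at least $+4>0$, so the local inequality you aim for (every hexagon ends with charge at most $0$) is unattainable by any completion of your rules; the surplus of these faces must also be cancelled, and that is where the real case analysis lives. Second, your structural claim that in a white-heavy face every black boundary vertex has both boundary neighbours white already fails for pentagons with two white vertices: their boundary reads white--black--white--black--black, so two of the three black vertices have a black neighbour on the boundary and may even be matched to each other, giving $p=1$.

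Beyond these errors, the redistribution rule (``push the surplus onto the neighbouring faces holding its outgoing matching edges'') never specifies how much charge goes to which of the two faces carrying each edge, and you explicitly leave unresolved the case of a matching edge that is outgoing for two different surplus-pushing faces. That bookkeeping is precisely the hard part: the paper handles it by classifying faces into good, neutral, transition and bad (Figure~\ref{fig:cases62}), by the asymmetric rules $(\mathbf{R3})$--$(\mathbf{R5})$ distinguishing the ``incoming'' from the ``outgoing'' white vertex of a transition hexagon, and by a final verification (Figure~\ref{fig:bad}) that every white vertex charged $-1$ by a bad or transition face is compensated by $+1$ from a neighbouring face. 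Your proposal replaces all of this by an unproven --- and, as formulated, false --- local inequality, so it amounts to a correct reformulation of the theorem rather than a proof of it.
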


\begin{proof}

Let $M$ be a maximal matching in $F$. Let vertices covered by $M$ be black, edges of $M$ black as well, remaining vertices and edges white.
Let $B$ (resp. $W$) be the set of all black (resp. white) vertices.

In order to prove the theorem, we use the discharging method. We set the charges to vertices and pentagonal faces as follows:
\begin{itemize}
\item Let the initial charge of each black vertex be $3$;
\item Let the initial charge of each white vertex be $-6$; and,
\item Let the initial charge of each pentagonal face be $3$.
 \end{itemize}

We will  prove that the total sum of the charge in the graph $3|B|-6|W|+36$ is non-negative. In other words,
$$
3|B|\ge 2|B|+2|W| -12,
$$
and it implies
$$
|M|=\frac{|B|}2\ge \frac{|B|+|W|-6}3 = \frac{n-6}3.
$$
Hence, we will obtain that the saturation number of $F$ cannot be smaller than the bound
$n/3-2$, if $3|B|-6|W|+36\ge 0$.

Now we will prove the inequality $3|B|-6|W|+36\ge 0$ in order to establish the theorem. First the initial charge is redistributed by the following rule:
\begin{itemize}
\item [$(\mathbf{R1})$.]\textit{Each white vertex sends $-2$ of charge to each adjacent black vertex.}
\end{itemize}
Since $M$ is a maximal matching, $W$ is an independent set in $F$, i.e. no two white vertices are adjacent. After applying $(\mathbf{R1})$, the white vertices have charge zero.

Let $v$ be a black vertex. It is adnacent to at least one black vertex, hence, it is adjacent to at most two white vertices. Let $e_v$ be the black edge incident with $v$, and let $f_v$ be the face incident with $v$, but not with $e_v$.
After having received $0$, $-2$, or $-4$ of charge $(\mathbf{R1})$, according to the number of white neighbors, $v$ has charge $3$, $1$, or $-1$.
\begin{itemize}
\item [$(\mathbf{R2})$.] \textit{Each black vertex $v$ sends all its remaining charge to $f_v$.}
\end{itemize}

All the charge initially present at vertices of $F$ is now at its faces.
The only case when a face was given some negative charge, is when a black vertex $v$ with two white neighbors sends $-1$ of charge to the face $f_v$. Therefore, if a face is incident with at most one white vertex, its charge is non-negative. Moreover, if a pentagon is incident with exactly one white vertex, its charge is at least 3, see Figure \ref{fig:cases51}$(a)$. Similarly, if a hexagon is incident with exactly one white vertex its charge is at least 1.


\begin{figure}[htp!]
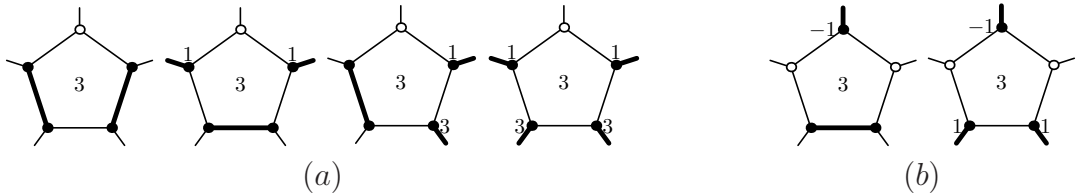

\centerline{\begin{tabular}{cc}
\hspace{.5cm}
\includegraphics[scale=0.8]{cases5.3}
\includegraphics[scale=0.8]{cases5.4}
\includegraphics[scale=0.8]{cases5.5}
\includegraphics[scale=0.8]{cases5.6}
\hspace{.5cm}
&
\hspace{.5cm}
\includegraphics[scale=0.8]{cases5.1}
\includegraphics[scale=0.8]{cases5.2}
\hspace{.5cm}
\\
$(a)$&$(b)$
\end{tabular}
}
\caption{$(a)$ All the possible types of pentagonal faces of $F$ incident to one white vertex. The initial charge of the pentagons  is 3, and its incident black vertices send charge  0, 1 or 3, as shown in the figure. In all the situations the charge of the pentagon after applying $(\mathbf{R2})$ is at least 3. $(b)$ Pentagonal faces of $F$ incident to two white vertices. After applying $(\mathbf{R2})$ the charge of the pentagon is at least $2$.}
\label{fig:cases51}
\end{figure}

\begin{figure}[htp!]
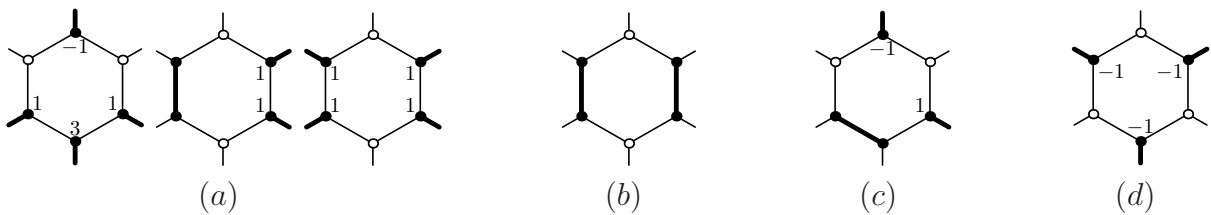

\centerline{\begin{tabular}{cccc}
\hspace{.5cm}
\includegraphics[scale=0.8]{cases.6}
\includegraphics[scale=0.8]{cases.3}
\includegraphics[scale=0.8]{cases.4}\hspace{.5cm}
&
\hspace{.5cm}
\includegraphics[scale=0.8]{cases.2}\hspace{.5cm}
&
\hspace{.5cm}
\includegraphics[scale=0.8]{cases.5}\hspace{.5cm}
&
\hspace{.5cm}
\includegraphics[scale=0.8]{cases.1}\hspace{.5cm}
\\
$(a)$&$(b)$&$(c)$&$(d)$
\end{tabular}
}
\caption{All the possible situations of hexagonal faces of $F$ incident to at least two white vertices. The hexagons in $(a)$ have positive charge; the hexagons in $(b)$ and $(c)$ have charge zero; and the hexagon in $(d)$ has charge $-3$. }
\label{fig:cases62}
\end{figure}


If a pentagonal face is incident with two white vertices, its charge is at least 2, see Figure~\ref{fig:cases51}$(b)$.
If a hexagonal face $h$ is incident with two white vertices as in Figure~\ref{fig:cases62}(a), it will have a positive charge. If the hexagon $h$ is as in Figure~\ref{fig:cases62}(b) or (c), then its charge is zero. Let us call these two types of hexagons of charge zero \emph{neutral} and \emph{transition} faces, respectively. The hexagonal face having three white neighbors (Figure~\ref{fig:cases62}(d)) has charge $-3$. Let call this type of hexagons \emph{bad}. All the other hexagons, as well as all the pentagons, have positive charge, and we call them  \emph{good}.

Let $f$ be a transition hexagonal face. It is incident to two white vertices, two black vertices forming a black edge, and two other black vertices. Let the white vertex adjacent to the two other black vertices be called \emph{incoming}, let the other one (adjacent to an endpoint of the black edge incident with $f$) be called \emph{outgoing}.

The next steps of the discharging are given with the following rules:
\begin{itemize}
\item [$(\mathbf{R3}).$] \textit{Each good face sends charge $1$ to each incident white vertex;}
\item [$(\mathbf{R4}).$] \textit{Each bad hexagonal face sends charge $-1$ to each incident white vertex.}
\item [$(\mathbf{R5}).$] \textit{Each transition hexagonal face sends charge $-1$ to the incoming incident white vertex, and it sends charge $1$ to the outgoing incident white vertex.}
\end{itemize}
It is clear that after applying these three rules there is no negative charge at the faces of $F$. The only elements of the graph that can contain some negative charge are the white vertices incident to a bad or transition hexagon.

Let $v$ be a white vertex that was sent charge $-1$ from a hexagon $h$ by $(\mathbf{R4})$ or $(\mathbf{R5})$. Let $w_1$ and $w_2$ be the black vertices adjacent to $v$ incident with $h$. The black edge incident to $w_i$ is not incident to $h$, $i=1,2$.
Let $u$ be the neighbor of $v$ not incident with $h$. Clearly, $u$ is black. Let $f_1$ and $f_2$ be the two faces incident with $v$ different from $h$. Without loss of generality we may assume that the black edge incident with $u$ is incident with $f_1$. Then $f_1$ is good or neutral, so it does not send negative charge to $v$.

Clearly, $f_2$ cannot be a bad hexagon, nor a neutral one. If $f_2$ is not incident to other white vertex but $v$, it is a good hexagon. If $f_2$ is incident to another white vertex at distance 3 from $v$, it is a good hexagon as well. If $f_2$ is incident to another white vertex at distance 2 from $v$, then it is a transition hexagon, moreover,
$v$ is the outgoing white vertex for $f_2$. In all the cases,
$f_2$ has sent charge 1 to $v$ by $(\mathbf{R3})$ or $(\mathbf{R5})$, see Figure \ref{fig:bad}.

\begin{figure}[htp!]
\centerline{\includegraphics[scale=0.8]{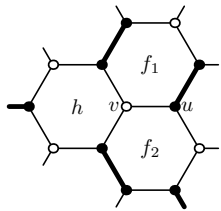}}
\caption{For a white vertex $v$ which receives charge $-1$ from a bad or transition hexagon $h$, there is always another hexagon, which sends positive charge to $v$.}
\label{fig:bad}
\end{figure}

Since there is no negative charge in the graph, the total sum of charge is non-negative, as desired.
\end{proof}

\section{Concluding remarks}

We managed to prove a lower and an upper bound on the saturation number of fullerene graphs, which are asymptotically equal.

The bound proved in Section~\ref{section:lower} turns out to be tight. There are infinitely many fullerene graphs with the saturation number equal to $n/3-2$: for example, a $(8,0)$-nanotube with $3k+1$ rings of hexagons and with caps depicted in Figure \ref{fig:80cap} has $48k+60$ vertices and admits a maximal matching of size $16k+18$. We are aware of other examples, even without adjacent pentagons.

\begin{figure}[htp!]
\centerline{\includegraphics[scale=0.8]{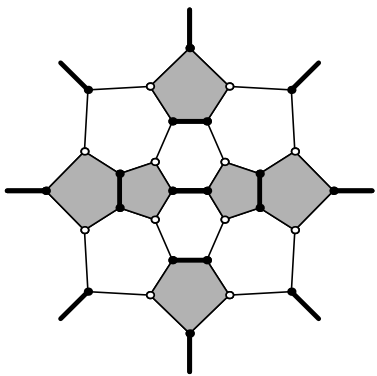}}
\caption{A cap of an $(8,0)$-nanotube with saturation number $n/3-2$.}
\label{fig:80cap}
\end{figure}

Comparing the newly established lower bound $n/3-2$ with the previous bound $3n/10$ we find that a fullerene graph can only admit a maximal matching of size exactly $3n/10$ if it has at most $60$ vertices. Moreover, this can only occur for fullerene graphs having exactly $20$, $30$, $40$, $50$, or $60$ vertices.

\begin{figure}[htp!]
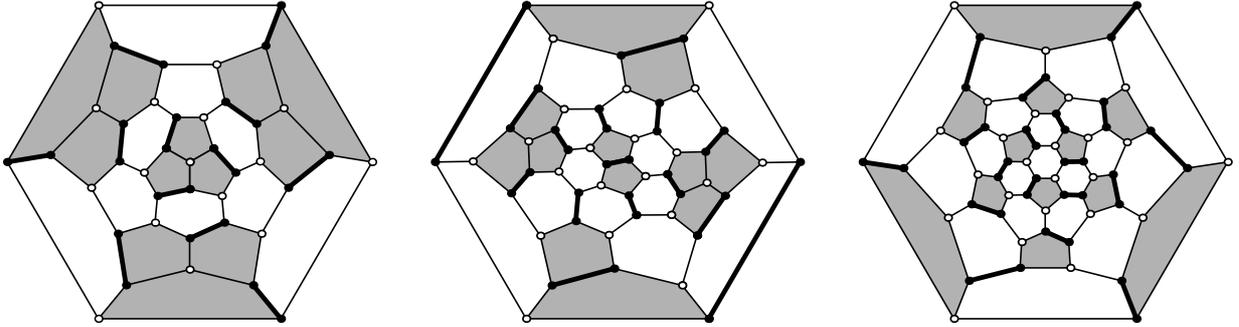

\centerline{\includegraphics[scale=0.8]{ex40.1}
\hfill
\includegraphics[scale=0.8]{ex50.1}
\hfill
\includegraphics[scale=0.8]{ex60.1}}
\caption{Fullerenes on $n= 40, 50, 60$ with saturation number  $3n/10$.}
\label{fig:n/3}
\end{figure}
Since there is only finitely many such graphs and the lists of those graphs are known, we can find easily those that admit a maximal matching of size $3n/10$ by inspecting each of them by a computer check. The dodecahedron (the only fullerene graph on 20 vertices) admits a maximal matching of size 6; none of the three fullerene graphs on 30 vertices does admit a maximal matching of size 9; there is exactly one fullerene graph on 40 and 50 vertices having a maximal matching of size 12 and 15, respectively; there are 7 fullerene graphs on 60 vertices admitting a maximal matching of size 18, including the buckminsterfullerene (the only fullerene graph on 60 vertices without adjacent pentagons).

The question to determine the exact value of the saturation number remains still open. Here we pose a conjecture concerning the problem.

\begin{conjecture}
There is a constant $C$ such that
$$
s(F)\le \frac{n}3 + C
$$
for any fullerene graph $F$ on $n$ vertices.
\end{conjecture}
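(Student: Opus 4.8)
\section*{Proof proposal for the Conjecture}

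The plan is to exploit the equivalence between maximal matchings and independent ``uncovered'' sets: a maximal matching of size $s$ is exactly a perfect matching of $F-I$ for some independent set $I$ with $|I|=n-2s$, and conversely any independent set $I$ for which $F-I$ has a perfect matching yields a maximal matching of size $(n-|I|)/2$. Thus the conjecture is equivalent to producing, for every fullerene $F$, an independent set $I$ with $|I|\ge n/3-C$ such that $F-I$ has a perfect matching. The periodic pattern of Proposition~\ref{pro:inftube} already realizes the optimal density: on a purely hexagonal region it leaves exactly two white vertices per hexagon, i.e.\ $|I|=n/3$, and the black edges furnish the required perfect matching. The entire difficulty is therefore concentrated in the twelve pentagons and in the interfaces between locally defined patterns, and the goal is to pay only a constant --- rather than the $O(\sqrt n)$ of the previous theorem --- for them.

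First I would localize the construction. Instead of routing two global dual trees $T_i^*$ (whose spanned caps have size $O(\sqrt n)$), I would cover each pentagon, or each tight cluster of pentagons, by a patch of bounded diameter on which a tailored matching pattern is installed by hand, leaving the optimal two white vertices per hexagon up to a bounded additive defect. Since there are only twelve pentagons, the total intrinsic cost of all these patches is $O(1)$. In the hexagonal bulk between the patches I would use the period-three ring pattern $M_0$. The remaining task is to stitch the bounded patches to the periodic bulk so that the union is a genuine matching, the set of white vertices stays independent, and $F$ minus the white set still admits a perfect matching --- all at constant cost.

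The main obstacle is the global consistency of the local patterns, which is exactly where the $O(\sqrt n)$ term really lives. The pattern $M_0$ carries both a direction and a period-three phase, and closing the fullerene surface forces these choices to agree around every cycle that encircles a cluster of pentagons. By an angular-defect (Gauss--Bonnet) accounting, each pentagon contributes curvature $\pi/3$, so the flat hexagonal pattern cannot be globally coherent: parallel-transporting the pattern around a pentagon rotates its direction, and reconciling two patches generically requires a \emph{seam} of defect hexagons whose length is proportional to the separation of the patches. To prove the conjecture one must show these seams can always be confined to bounded regions --- for instance by invoking the Six Pentagons Lemma to group the pentagons so that every seam runs between nearby defects, and by exhibiting a seam pattern whose cost per unit of seam length is \emph{zero} rather than positive, so that only the bounded endpoints contribute. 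Designing such a tension-free seam, uniformly over all tube types $(p_1,p_2)$ and over spherical fullerenes where no long tube exists at all, is the crux and the likely reason the statement is still open.

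Finally I would split off the genuinely small and the genuinely spherical cases. For fullerenes of bounded diameter the whole graph is cap-like and a direct patch construction, together with a finite computer check of the base cases as already used in the concluding remarks, should suffice; for long thin nanotubes the bulk argument gives constant overhead once the fixed caps are matched by hand, since a cap of a given tube type has bounded size. The fact that the target constant is consistent with the tight examples achieving $n/3-2$ gives some confidence that $C$ can in fact be taken small.
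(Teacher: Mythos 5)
You should first be clear about the status of the statement: the paper does not prove it --- it is posed explicitly as an open conjecture, the paper's own upper bound being $s(F)\le n/3+O(\sqrt n)$. So there is no proof of record to compare against, and your text, to its credit, does not actually claim to close the question: it is a research programme, and you yourself name its unproven core. Judged as a proof, that core is a genuine gap. Your opening reduction is sound (a maximal matching of size $s$ is equivalent to an independent set $I$ with $|I|=n-2s$ such that $F-I$ has a perfect matching, and Proposition~\ref{pro:inftube} realizes the optimal density $|I|=n/3$ on hexagonal regions), but everything after that rests on two unconstructed objects: (i) bounded-size patches carrying hand-made patterns around each pentagon ``cluster'', and (ii) seams joining shifted copies of the periodic pattern $M_0$ at \emph{zero} cost per unit length. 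Neither is exhibited, and the difficulty is not merely technical.

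Concretely, the clustering step fails as stated: the Six Pentagons Lemma (Lemma~\ref{c.10}) only bounds pentagon-to-pentagon distances by $O(\sqrt n)$, it does not yield clusters of bounded diameter. In icosahedral fullerenes all twelve pentagons are pairwise at distance $\Theta(\sqrt n)$, so no bounded patch covers more than one pentagon, and the entire cost migrates to the seams, whose total length is then $\Theta(\sqrt n)$. Your plan thus stands or falls with the tension-free seam, which is precisely where the monodromy obstruction you describe bites: transporting $M_0$ around a pentagon changes both its direction (each pentagon carries angular defect $\pi/3$) and, in general, its period-three phase, and you give no local pattern reconciling two such shifted copies of $M_0$ along an arbitrarily long common boundary without sacrificing a positive density of white vertices --- if the per-length cost of a seam is any $\varepsilon>0$, the construction only reproduces the paper's $n/3+O(\sqrt n)$ bound, not the conjectured $n/3+C$. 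Until such a seam (or a genuinely different mechanism, e.g.\ a pattern invariant under the relevant rotation-plus-translation holonomy) is constructed and verified uniformly over all tube types $(p_1,p_2)$ and over spherical fullerenes, the proposal is an honest reformulation of why the conjecture is hard, not a proof of it.
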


The problem of finding of minimal independent dominating set is NP-complete~\cite{YG}. This problem is NP-complete even when restricted to planar or bipartite graphs of maximal degree three~\cite{YG}, and remains NP-complete for  planar cubic graphs~\cite{HK}. These results imply the next question.
\begin{problem}
Is the problem to determine the saturation number  for the class of fullerene graphs  NP-complete?
\end{problem}



\begin{thebibliography}{999}
\setlength{\itemsep}{0pt}

\bibitem{ADKLS}
    V.~Andova, T.~Do\v sli\' c, M.~Krnc, B.~Lu\v zar, R.~\v Skrekovski,
    \textit{On the diameter and some related invariants of fullerene graphs},
     MATCH Commun. Math. Comput. Chem.  \textbf{68} (2012),  109--130.

\bibitem{CS}
    M.~Chudnovsky, P.~Seymour,
    \textit{Perfect matching in planar graphs},
    Combinatorica, \textbf{32}, (2012), 403--423.

\bibitem{D}
    S.~M.~Daugherty,
    \textit{Independent Sets and Closed-Shell Independent Sets of Fullerenes},
    Ph.D. thesis, University of Victoria, 2009.




\bibitem{D3}
    T.~Do\v{s}li\'{c},
    \textit{Saturation number of fullerene graphs},
    J. Math. Chem. \textbf{43} (2008), 647--657.



\bibitem{DLS}
   Z.~Dvo\v r\'ak, B.~Lidick\' y, R.~\v Skrekovski,
  \textit{Bipartizing fullerenes},
   European J. Combin. \textbf{33} (2012), 1286--1293.


\bibitem{E}
    J.~Edmonds,
    \textit{Paths, trees, and flowers},
    Canad. J. Math. \textbf{17} (1965), 449--467.

\bibitem{EKKKN}
    L.~Esperet, F.~Kardo\v s, A.~D.~King, D.~Kr\'{a}l, S.~Norine,
    \textit{Exponentially many perfect matchings in cubic graphs},
    Adv. Math. \textbf{227} (2011), 1646--1664.



\bibitem{FL}
    S.~Fajtlowicz, C.~E.~Larson,
    \textit{Graph-Theoretic Independence as a Predictor of Fullerene Stability},
    Chem. Phys. Letters \textbf{377} (2003), 485--490.

\bibitem{FKS}
    L.~Faria, S.~Klein, M.~Stehl\'\i k,
    \textit{Odd cycle transversals and independent sets in fullerene graphs},
    SIAM J. Discrete Math. \textbf{26}, 145--149.

\bibitem{FRFHC}
    P.~W.~Fowler, K.~M.~Rogers, S.~Fajtlowicz, P.~Hansen and G.~Caporossi,
    Facts and conjectures about fullerene graphs: leapfrog, cylindrical
    and Ramanujan fullerenes,
    \textit{In: A. Betten, A. Kohnert, R. Laue and A. Wassermann, Editors, Algebraic Combinatorics and Applications, Springer, Berlin} (2000).

\bibitem{FM}
    P.~W.~Fowler, D.~E.~Manolopoulos,
    \textit{An Atlas of Fullerenes},
    Oxford Univ. Press, Oxford, 1995.


\bibitem{GM}
    B.~Gr\"{u}nbaum, T.~S.~Motzkin,
    \textit{The number of hexagons and the simplicity of geodesicson certain polyhedra},
     Can. J. Math. \textbf{15} (1963), 744--751.


\bibitem{HT}
    C.~H.~Heckman, R.~Thomas,
    \textit{Independent sets in triangle-free cubic planar graphs},
    J. Combin. Theory B \textbf{96} (2006), 253--275.

\bibitem{HK}
    J.~D.~Horton, K.~Kilakos,
    \textit{Minimum edge dominating sets},
    SIAM J. Discret. Math. \textbf{6} (1993), 375--387.

\bibitem{KKMS}
    F.~Kardo\v{s}, D.~Kr\'al', J.~Mi\v{s}kuf, J.-S.~Sereni,
    \textit{Fullerene graphs have exponentially many perfect matchings},
    J. Math. Chem. \textbf{46} (2009) 443--447.

\bibitem{KS}
   F.~Kardo\v{s}, R.~\v{S}krekovski,
    \textit{Cyclic edge-cuts in fullerene graphs},
    J. Math. Chem. \textbf{44} (2007), 121--132.

%



\bibitem{KHOCS}
    H.~W.~Kroto, J.~R.~Heath, S.~C.~O'Brien, R.~F.~Curl,  R.~E.~Smalley,
    \textit{ C60: Buckminsterfullerene},
    Nature \textbf{318} (1985), 162--163.

\bibitem{LP}
    L.~Lov\'{a}sz, M.~D.~Plummer,
    Matching theory,
    \textit{Elsevier Science}, Amsterdam, 1986.

\bibitem{M}
    J.~Malkevitch,
    \textit{Geometrical and combinatorial questions about fullerenes},
    in: P. Hansen, P. Fowler, M. Zheng (Eds.), Discrete Mathematical Chemistry,
    DIMACS Series in Discrete Mathematics and Theoretical Computer Science \textbf{51} (2000), 261--266.

\bibitem{YG}
    M.~Yannakakis, F.~Gavril,
    \textit{Edge Dominating Sets in Graphs},
    SIAM. J. Appl. Math. \textbf{38}(1980), 364--372.

\bibitem{Z}
    M.~Zito, \textit{Small maximal matchings in random graphs},
    Theor. Comput. Sc. \textbf{297} (2003), 487--507.


\end{thebibliography}
 \end{document}